\newcommand{\authorfootnotes}{\renewcommand\thefootnote{\@fnsymbol\c@footnote}}%
\newcommand{\cp}{\,\square\,}
\newcommand{\rotwo}{\rho_2}
\newcommand{\roo}{\rho^{\rm o}}
\tikzstyle{vertex}=[circle, draw, inner sep=0pt, minimum size=6pt]
\newtheorem{thm}{Theorem}[section]
\newtheorem{lem}[thm]{Lemma}
\newtheorem{prop}[thm]{Proposition}
\begin{document}

\title{Packings in bipartite prisms and hypercubes}

\author{
Bo\v{s}tjan Bre\v{s}ar$^{a,b}$
\and
Sandi Klav\v{z}ar$^{a,b,c}$
\and
Douglas F. Rall$^{d}$\\
}

\date{\today}

\maketitle

\begin{center}
$^a$ Faculty of Natural Sciences and Mathematics, University of Maribor, Slovenia\\
$^b$ Institute of Mathematics, Physics and Mechanics, Ljubljana, Slovenia\\
$^c$ Faculty of Mathematics and Physics, University of Ljubljana, Slovenia\\
$^d$ Department of Mathematics, Furman University, Greenville, SC, USA\\
\end{center}

\begin{abstract}
The $2$-packing number $\rho_2(G)$ of a graph $G$ is the cardinality of a largest $2$-packing of $G$ and the open packing number $\rho^{\rm o}(G)$ is the cardinality of a largest open packing of $G$, where an open packing (resp.~$2$-packing) is a set of vertices in $G$ no two (closed) neighborhoods of which intersect. It is proved that if $G$ is bipartite, then $\rho^{\rm o}(G\cp K_2) = 2\rho_2(G)$. For hypercubes, the lower bounds $\rho_2(Q_n) \ge  2^{n - \lfloor \log n\rfloor -1}$ and $\rho^{\rm o}(Q_n) \ge 2^{n - \lfloor \log (n-1)\rfloor -1}$ are established. These findings are applied to injective colorings of hypercubes. In particular, it is demonstrated that $Q_9$ is the smallest hypercube which is not perfect injectively colorable. It is also proved that $\gamma_t(Q_{2^k}\times H) = 2^{2^k-k}\gamma_t(H)$, where $H$ is an arbitrary graph with no isolated vertices.
\end{abstract}

\noindent
{\bf Keywords:} $2$-packing number, open packing number, bipartite prism, hypercube, injective coloring, (total) domination number \\

\noindent
{\bf AMS Subj.\ Class.\ (2020)}: 05C69, 05C76

\maketitle

\section{Introduction}
\label{sec:intro}

For many reasons, hypercubes are ubiquitous in theoretical computer science and in combinatorics. Understanding their structure is therefore a fundamental problem. Although hypercubes have a seemingly simple structure, we quickly encounter very complex problems. For instance, one of them was the middle levels problem, which was successfully dismissed~\cite{mutze-2016}. On the other hand, the problem of determining the domination number of hypercubes is beyond the reach of existing methods.
To date, exact values of $\gamma(Q_n)$ are only known for $n \le 9$, where the value $\gamma(Q_9) = 62$ was obtained in~\cite{ostergard-2001}, and for the following two infinite families.

\begin{thm}{\rm(\cite{harary-1993,wee-1988})}
\label{thm:infinite-families}
If $k\ge 1$, then $\gamma(Q_{2^k-1}) = 2^{2^k-k-1}$ and $\gamma(Q_{2^k}) = 2^{2^k-k}$.
\end{thm}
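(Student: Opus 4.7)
The plan is to handle both equalities via the binary Hamming code $\mathcal{H}_k$ of length $2^k-1$, which is a perfect $1$-error-correcting code of size $2^{2^k-1-k}$. Identifying $V(Q_n)$ with $\mathbb{F}_2^n$, Hamming distance coincides with distance in $Q_n$, so a perfect $1$-error-correcting code of length $n$ is exactly the same object as a dominating set of $Q_n$ whose closed neighborhoods partition $V(Q_n)$. Such sets attain the sphere-packing bound $|D|(n+1) \ge 2^n$ with equality, and this is the central combinatorial fact I would exploit.

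For $n = 2^k-1$, I would first note that $Q_n$ is $n$-regular with $n+1 = 2^k$, so the sphere-packing bound immediately yields $\gamma(Q_n) \ge 2^n/(n+1) = 2^{n-k}$. For the matching upper bound I would exhibit $\mathcal{H}_k$ explicitly as the null space of the $k \times (2^k-1)$ parity-check matrix whose columns enumerate the nonzero vectors of $\mathbb{F}_2^k$, and verify (by the standard syndrome argument) that each non-codeword has a unique codeword at Hamming distance $1$, so that $\mathcal{H}_k$ is a perfect dominating set of the required size.

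For $n = 2^k$, I would decompose $Q_n = Q_{n-1} \cp K_2$ where $n-1 = 2^k-1$. The upper bound comes from taking two copies of the Hamming code, $D = \mathcal{H}_k \times \{0,1\}$: this has size $2^{n-k}$, and for $(v,i) \in V(Q_n)$ with $v \notin \mathcal{H}_k$, the unique neighbor $u$ of $v$ in $\mathcal{H}_k$ yields $(u,i) \in D$ adjacent to $(v,i)$; if $v \in \mathcal{H}_k$, then $(v,i) \in D$ already.

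The matching lower bound for $n = 2^k$ is the main obstacle, since the sphere-packing bound only yields $\lceil 2^n/(n+1)\rceil$, which is strictly less than $2^{n-k}$ (for example $29 < 32$ when $k=3$), so a genuinely finer argument is needed. Given a dominating set $D$ of $Q_n$, I would write $A_i = \{v : (v,i) \in D\}$ for $i \in \{0,1\}$ and observe that any vertex of $Q_{n-1}$ not dominated by $A_0$ inside $Q_{n-1}$ must lie in $A_1$, and symmetrically; the hope is to combine this with the rigidity imposed by the nonexistence of a perfect code in $Q_n$ (since $n+1 = 2^k+1$ does not divide $2^n$) to push the bound up to $|D| \ge 2\gamma(Q_{n-1}) = 2^{n-k}$. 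I expect this last step to resist a short elementary treatment and to require a coding-theoretic covering-radius or Lloyd-type estimate; if my own attempts stall, I would invoke the argument of Wee rather than reproving the tight covering-code bound from scratch.
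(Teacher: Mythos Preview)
The paper does not actually prove this theorem: it is quoted from the references \cite{harary-1993,wee-1988}, and the only commentary offered is the sentence immediately following the statement, noting that the first equality comes from the existence of $1$-perfect codes in $Q_{2^k-1}$. Your plan for $n=2^k-1$ is exactly that standard argument and is correct; your upper bound for $n=2^k$ via $\mathcal{H}_k\times\{0,1\}$ is also fine.

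Your assessment of the remaining piece is accurate: the sphere-packing bound gives only $\gamma(Q_{2^k})\ge\lceil 2^{2^k}/(2^k+1)\rceil$, which is too weak, and the observation about $A_0,A_1$ yields at best $|D|\ge \gamma(Q_{n-1})$ plus the count of vertices each $A_i$ fails to dominate, which does not obviously close the gap. The sharp lower bound $\gamma(Q_{2^k})\ge 2^{2^k-k}$ is precisely van Wee's improved sphere bound on covering codes, and the paper simply cites it rather than reproving it. So your proposal is correct as far as it goes, and your instinct to invoke \cite{wee-1988} for the last step is exactly what the paper itself does; there is no self-contained argument in the paper for you to compare against.
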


The values $\gamma(Q_{2^k-1}) = 2^{2^k-k-1}$ can be obtained from the fact that hypercubes $Q_{2^k-1}$ admit 1-perfect codes, in which case the domination number coincides with the cardinality of a 1-perfect code.

The most important variation of the domination number is the total domination number; see a recent monograph~\cite{bookHHH} surveying domination theory with the two invariants in the central role. Roughly speaking, domination operates with closed neighborhoods while total domination with open neighborhoods, which often causes a different behavior of the invariants. However, as proved in~\cite{azarija-2017} by using hypergraph transversals, $\gamma_t(Q_{n+1})= 2\gamma(Q_n)$ for all $n$, which makes the domination number and the total domination number in hypercubes tightly connected. More generally, the authors of~\cite{azarija-2017} proved that $\gamma_t(G\cp K_2)=2\gamma(G)$ as soon as $G$ is a bipartite graph.

The concepts of packing number and open packing number of a graph are often used in domination theory, since they present natural lower bounds on the domination number and the total domination number, respectively, of the graph. The concept of packing was used back in 1975 by Meir and Moon in their classical theorem stating that in a tree the domination number equals the packing number~\cite{mm-1975}. On the other hand, open packing was introduced by Henning and Slater~\cite{hs}, and was later used in~\cite{Rall} to prove a canonical formula for the total domination number of the direct product of two graphs, which holds if one of the factors has the total domination number equal to its open packing number. Similarly as total domination is related to domination, open packing can be regarded as a version of packing in which closed neighborhoods are replaced with open neighborhoods. See~\cite{mohammadi-2019, mojdeh-2020, mojdeh-2022} for some recent studies of (open) packings as well as~\cite{gao-2017} for their application.

Open packings are also related to the so-called injective colorings of graphs, cf.~\cite{pp}. More precisely, an injective coloring of a graph is exactly a partition of its vertex set into open packings. In a recent paper~\cite{bsy}, graphs that admit injective colorings such that each of the color classes is a maximum open packing were considered. While proving this property for hypercubes of some small dimensions, it was also proved for those whose dimension is a power of $2$. Yet, nothing else was known, including whether there exists a hypercube that does not satisfy this property. One of the reasons for the difficulty of this question is that the open packing number (i.e., the cardinality of a maximum open packing) has not been known.

We proceed as follows. In the remainder of this introduction, we provide the definitions and concepts we need for the following. In Section~\ref{sec:bip} we prove that the open packing number of a prism over a bipartite graph $G$ is twice the $2$-packing number of $G$. This result nicely complements~\cite[Theorem 1]{azarija-2017} which states that the total domination number of a prism over a bipartite graph $G$ is twice the domination number of $G$. We also demonstrate that in general, the open packing number of a prism over a graph $G$ can be arbitrary larger that the $2$-packing number of $G$. In Section~\ref{sec:hyp} we prove lower bounds on the $2$-packing number and the open packing number of hypercubes. The bounds are sharp for small dimensions and for two infinite families, but are not sharp in general. In the subsequent section we apply these findings to injective colorings of hypercubes. In particular we demonstrate that $Q_9$ is the smallest hypercube which is not perfect injectively colorable. In the concluding remarks, we give an overview of the known values for the hypercube invariants considered here and also derive the total domination number of the direct product of $Q_{2^k}$ and an arbitrary graph.

\subsection{Preliminaries}
\label{sec:prelim}

Let $G = (V(G), E(G))$ be a graph and $x\in V(G)$. The {\em open neighborhood} $N(x)$ is the set of vertices adjacent to $x$ and the {\em closed neighborhood} is $N[x] = N(x)\cup \{x\}$. A set $D\subseteq V(G)$ is a {\em dominating set} of $G$ if each vertex of $V(G)\setminus D$ has a neighbor in $D$. The cardinality of a smallest dominating set of $G$ is the {\em domination number} $\gamma(G)$ of $G$. Similarly, $D\subseteq V(G)$ is a {\em total dominating set} of $G$ if each vertex of $V(G)$ has a neighbor in $D$. The cardinality of a smallest dominating set of $G$ is the {\em total domination number} $\gamma_t(G)$ of $G$.

Let $X\subseteq V(G)$. Then $X$ is a {\em $2$-packing} of $G$ if $N[x] \cap N[y] = \emptyset$ for every pair of distinct vertices $x,y\in X$. Similarly, if $N(x) \cap N(y) = \emptyset$ for every pair of distinct vertices $x,y\in X$, then $X$ is an  {\em open packing} of $G$. The cardinality of a largest $2$-packing of $G$ is the {\em $2$-packing number} $\rotwo(G)$ of $G$ and the cardinality of a largest open packing of $G$ is the {\em open packing number} $\roo(G)$ of $G$. By a {\em $\rotwo$-set} of $G$ we mean a $2$-packing of $G$ of cardinality $\rotwo(G)$. A {\em $\roo$-set} of $G$ is defined analogously.

If $X$ is a $2$-packing such that  $V(G) = \cup_{x\in X} N[x]$ then we say that $X$ is a {\em $1$-perfect code} of $G$. In domination theory, 1-perfect codes are known as {\em efficient dominating sets}, see~\cite[Chapter 9]{bookHHH} and~\cite{KP}. Since $\gamma(G) \ge \rotwo(G)$ for every graph $G$, if $X$ is a $1$-perfect code of $G$, then $X$ is also a dominating set of $G$. This observation leads to the following well known fact.

\begin{prop}
\label{prop:gamma=rotwo}
If $G$ admits a $1$-perfect code, then $\gamma(G) = \rotwo(G)$. If in addition $G$ is $r$-regular, then $\gamma(G) = \rotwo(G)=\frac{n(G)}{r+1}$.
\end{prop}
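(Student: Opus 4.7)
The plan is to chain together two simple inequalities and exploit the fact that a $1$-perfect code simultaneously plays the role of a dominating set and a $2$-packing. Concretely, let $X$ be a $1$-perfect code of $G$. By definition $X$ is a $2$-packing, so $|X|\le\rotwo(G)$. On the other hand, the condition $V(G)=\bigcup_{x\in X}N[x]$ says precisely that $X$ is a dominating set, whence $|X|\ge \gamma(G)$. Combining this with the generic bound $\gamma(G)\ge\rotwo(G)$ recalled just before the proposition, I obtain
\[
|X|\ \ge\ \gamma(G)\ \ge\ \rotwo(G)\ \ge\ |X|,
\]
forcing equality throughout and yielding $\gamma(G)=\rotwo(G)$.

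For the second assertion, I would assume additionally that $G$ is $r$-regular. The defining property of a $1$-perfect code implies that the closed neighborhoods $\{N[x]:x\in X\}$ cover $V(G)$, and because $X$ is a $2$-packing these closed neighborhoods are pairwise disjoint; hence they partition $V(G)$. Since each $N[x]$ has size $r+1$ by $r$-regularity, counting vertices gives $|X|(r+1)=n(G)$, i.e.\ $|X|=n(G)/(r+1)$. Combined with the first part, this yields $\gamma(G)=\rotwo(G)=n(G)/(r+1)$.

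There is essentially no obstacle here: the only thing to be careful about is the direction of the inequality $\gamma(G)\ge\rotwo(G)$ (a dominating set must use at least one distinct vertex in each closed neighborhood of a $2$-packing, since those neighborhoods are disjoint), and the fact that $V(G)=\bigcup_{x\in X}N[x]$ really is a disjoint union for a $1$-perfect code. Both are immediate from the definitions recalled in the preliminaries, so the proof is expected to be a few lines.
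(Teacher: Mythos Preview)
Your argument is correct and is exactly the expansion the paper intends: the paper does not give a formal proof but only records the observation that a $1$-perfect code is simultaneously a dominating set and a $2$-packing, together with $\gamma(G)\ge\rotwo(G)$, which is precisely the chain of inequalities you wrote out; the counting for the $r$-regular case via the partition into closed neighborhoods is likewise the standard one-line justification.
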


The \emph{Cartesian product} $G\cp H$ of graphs $G$ and $H$ is the graph whose vertex set is $V(G) \times V(H)$, and two vertices $(g_1,h_1)$ and $(g_2,h_2)$ are adjacent in $G \cp H$ if either $g_1=g_2$ and $h_1h_2$ is an edge in $H$ or $h_1=h_2$ and $g_1g_2$ is an edge in $G$.  For a vertex $g$ of $G$, the subgraph of $G\cp H$ induced by the set $\{(g,h):\, h\in V(H)\}$ is an \emph{$H$-fiber} and is denoted by $^g\!H$.  Similarly, for $h \in H$, the \emph{$G$-fiber}, $G^h$, is the subgraph induced by $\{(g,h):\, g\in V(G)\}$. Cartesian product is commutative and associative. The {\em hypercube} of dimension $n$, or the {\em $n$-cube}, is isomorphic to $K_2\cp\cdots\cp K_2$, where there are $n$ factors $K_2$, and is denoted by $Q_n$. The equality $Q_n=Q_{n-1}\cp K_2$ will be used (at least implicitly) several times in the paper. Finally, the \emph{direct product} $G\times H$ of graphs $G$ and $H$ has the vertex set $V(G) \times V(H)$, and two vertices $(g_1,h_1)$ and $(g_2,h_2)$ are adjacent in $G \times H$ if $g_1g_2$ is an edge in $G$ and $h_1h_2$ is an edge in $H$.

\section{Packing vs.\ open packing in bipartite prisms}
\label{sec:bip}

In~\cite{azarija-2017} it was proved that if $G$ is a bipartite graph, then $\gamma_t(G\cp K_2)=2\gamma(G)$. In this section we prove an analogous result that connects the open packing number and the packing number.

We begin with the following simple lemma, which holds in all graphs.

\begin{lem}
\label{lem:lower-bound}
If $G$ is a graph, then $\roo(G\cp K_2)\ge 2\rotwo(G)$.
\end{lem}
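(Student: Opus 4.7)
The plan is to construct an explicit open packing of $G\cp K_2$ of size $2\rotwo(G)$ by taking a maximum $2$-packing of $G$ and duplicating it across the two fibers of the prism.

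First, I would fix a $\rotwo$-set $X$ of $G$ and label the two vertices of $K_2$ as $0$ and $1$, so that $V(G\cp K_2) = V(G)\times\{0,1\}$ decomposes into the two $G$-fibers $G^0$ and $G^1$ joined by a perfect matching. I would then propose the candidate set
\[
Y = X\times\{0,1\} = \{(x,0):x\in X\}\cup\{(x,1):x\in X\}.
\]
Clearly $|Y|=2|X|=2\rotwo(G)$, so it suffices to show that $Y$ is an open packing of $G\cp K_2$. The key observation that drives all the verifications is that for any $x\in V(G)$ and $i\in\{0,1\}$,
\[
N_{G\cp K_2}((x,i)) = \bigl(N_G(x)\times\{i\}\bigr)\cup\bigl\{(x,1-i)\bigr\}.
\]

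Next I would check that $N_{G\cp K_2}(u)\cap N_{G\cp K_2}(v)=\emptyset$ for every pair of distinct $u,v\in Y$, splitting into three cases. For $u=(x,0)$ and $v=(x,1)$ with the same $x\in X$, the two neighborhoods lie in opposite fibers except for the cross-matching vertices $(x,1)$ and $(x,0)$, which are distinct from each other, so they are disjoint. For $u=(x,i)$ and $v=(x',i)$ with distinct $x,x'\in X$ (same fiber), the fiber parts are disjoint because $N_G(x)\cap N_G(x')\subseteq N_G[x]\cap N_G[x']=\emptyset$ by the $2$-packing property, and the matching partners $(x,1-i)$ and $(x',1-i)$ differ. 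For $u=(x,0)$ and $v=(x',1)$ with $x\neq x'$ in $X$ (opposite fibers), any common neighbor would have to be the matching partner of one of them, which forces either $x'\in N_G(x)$ or $x\in N_G(x')$; both are forbidden since $X$ is a $2$-packing of $G$.

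No step is really delicate here; the only thing to be careful about is not to overlook the two cross-matching vertices $(x,1-i)$ that sit outside the fiber contribution, since these are what tie the two copies together and are the reason the argument uses the $2$-packing (closed-neighborhood) condition rather than just the open-packing condition on $X$. Once all three cases are dispatched, $Y$ is an open packing of $G\cp K_2$, yielding $\roo(G\cp K_2)\ge|Y|=2\rotwo(G)$.
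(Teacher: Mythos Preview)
Your proposal is correct and follows exactly the same construction as the paper: take a $\rotwo$-set $P$ of $G$ and observe that $P\times V(K_2)$ is an open packing of $G\cp K_2$. The paper states this in one line without the case analysis, so your version simply fills in the verification that the paper leaves to the reader.
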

\begin{proof}
Let $G$ be a graph, and let $P$ be a $\rotwo$-set of $G$. Then $P\times V(K_2)$ is an open packing of $G\cp K_2$, hence the result.
\end{proof}

In general, $\roo(G\cp K_2)$ can be arbitrary larger than $2 \rotwo(G)$. For an example consider the family of graphs $G_k$, $k\ge 1$, defined as follows. $G_k$ contains $2k$ disjoint cycles $C_5$ connected in a row by an edge between two consecutive $5$-cycles. This informal definition of $G_k$ should be clear from Fig.~\ref{fig:G2} where $G_2\cp K_2$ is drawn. As an arbitrary packing of $G_k$ contains at most one vertex of each $C_5$ we infer that $\rotwo(G_k) = 2k$. On the other hand, repeating the pattern as shown in Fig.~\ref{fig:G2} for $k=2$, we get $\roo(G_k\cp K_2) \ge 5k$.

\begin{figure}[ht!]
\begin{center}
\begin{tikzpicture}[scale=0.78,style=thick,x=1cm,y=1cm]
\def\vr{3pt}

\begin{scope} 
\coordinate(1) at (0,0);
\coordinate(2) at (1,0);
\coordinate(3) at (2,0);
\coordinate(4) at (3,0);
\coordinate(5) at (4,0);
\coordinate(6) at (0,1);
\coordinate(7) at (1,1);
\coordinate(8) at (2,1);
\coordinate(9) at (3,1);
\coordinate(10) at (4,1);
\draw (1) -- (5) -- (10) -- (6) -- (1);
\draw (1) -- (6);
\draw (2) -- (7);
\draw (3) -- (8);
\draw (4) -- (9);
\draw (5) -- (10);
\draw (4,0) -- (5,0);
\draw (4,1) -- (5,1);
\draw (0,1) .. controls (1,2) and (3,2) .. (4,1);
\draw (0,0) .. controls (1,-1) and (3,-1) .. (4,0);
\foreach \i in {1,2,...,10}
{
\draw(\i)[fill=white] circle(\vr);
}
\draw(4)[fill=black] circle(\vr);
\draw(6)[fill=black] circle(\vr);
\draw(7)[fill=black] circle(\vr);
\end{scope}
		
\begin{scope}[xshift=5cm, yshift=0cm] 
\coordinate(1) at (0,0);
\coordinate(2) at (1,0);
\coordinate(3) at (2,0);
\coordinate(4) at (3,0);
\coordinate(5) at (4,0);
\coordinate(6) at (0,1);
\coordinate(7) at (1,1);
\coordinate(8) at (2,1);
\coordinate(9) at (3,1);
\coordinate(10) at (4,1);
\draw (1) -- (5) -- (10) -- (6) -- (1);
\draw (1) -- (6);
\draw (2) -- (7);
\draw (3) -- (8);
\draw (4) -- (9);
\draw (5) -- (10);
\draw (4,0) -- (5,0);
\draw (4,1) -- (5,1);
\draw (0,1) .. controls (1,2) and (3,2) .. (4,1);
\draw (0,0) .. controls (1,-1) and (3,-1) .. (4,0);
\foreach \i in {1,2,...,10}
{
\draw(\i)[fill=white] circle(\vr);
}
\draw(3)[fill=black] circle(\vr);
\draw(8)[fill=black] circle(\vr);
\end{scope}

\begin{scope}[xshift=10cm, yshift=0cm] 
\coordinate(1) at (0,0);
\coordinate(2) at (1,0);
\coordinate(3) at (2,0);
\coordinate(4) at (3,0);
\coordinate(5) at (4,0);
\coordinate(6) at (0,1);
\coordinate(7) at (1,1);
\coordinate(8) at (2,1);
\coordinate(9) at (3,1);
\coordinate(10) at (4,1);
\draw (1) -- (5) -- (10) -- (6) -- (1);
\draw (1) -- (6);
\draw (2) -- (7);
\draw (3) -- (8);
\draw (4) -- (9);
\draw (5) -- (10);
\draw (4,0) -- (5,0);
\draw (4,1) -- (5,1);
\draw (0,1) .. controls (1,2) and (3,2) .. (4,1);
\draw (0,0) .. controls (1,-1) and (3,-1) .. (4,0);
\foreach \i in {1,2,...,10}
{
\draw(\i)[fill=white] circle(\vr);
}
\draw(4)[fill=black] circle(\vr);
\draw(6)[fill=black] circle(\vr);
\draw(7)[fill=black] circle(\vr);
\end{scope}

\begin{scope}[xshift=15cm, yshift=0cm] 
\coordinate(1) at (0,0);
\coordinate(2) at (1,0);
\coordinate(3) at (2,0);
\coordinate(4) at (3,0);
\coordinate(5) at (4,0);
\coordinate(6) at (0,1);
\coordinate(7) at (1,1);
\coordinate(8) at (2,1);
\coordinate(9) at (3,1);
\coordinate(10) at (4,1);
\draw (1) -- (5) -- (10) -- (6) -- (1);
\draw (1) -- (6);
\draw (2) -- (7);
\draw (3) -- (8);
\draw (4) -- (9);
\draw (5) -- (10);
\draw (0,1) .. controls (1,2) and (3,2) .. (4,1);
\draw (0,0) .. controls (1,-1) and (3,-1) .. (4,0);
\foreach \i in {1,2,...,10}
{
\draw(\i)[fill=white] circle(\vr);
}
\draw(3)[fill=black] circle(\vr);
\draw(8)[fill=black] circle(\vr);
\end{scope}
\end{tikzpicture}
\caption{An open packing in $G_2\cp K_2$}
	\label{fig:G2}
\end{center}
\end{figure}

For bipartite graphs, however, the above phenomena cannot occur as the main result of this section asserts.

\begin{thm}
\label{thm:bip-prisms}
If $G$ is a bipartite graph, then $\roo(G\cp K_2)=2\rotwo(G)$.
\end{thm}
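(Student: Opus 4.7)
The lower bound $\roo(G\cp K_2) \ge 2\rotwo(G)$ is supplied by Lemma~\ref{lem:lower-bound}, so the goal is to establish the reverse inequality $\roo(G\cp K_2) \le 2\rotwo(G)$ using the bipartiteness of $G$. The plan is to take a maximum open packing $P$ of $G\cp K_2$ and redistribute its vertices, using both the $K_2$-layer and the bipartition of $G$, into two sets that turn out to be $2$-packings of $G$ whose sizes sum to exactly $|P|$. Concretely, let $(A,B)$ be the bipartition of $G$ and denote the vertices of $G\cp K_2$ by $(x,i)$ with $i\in\{0,1\}$. Set $P_i=\{x\in V(G):(x,i)\in P\}$ and $P_i^A=P_i\cap A$, $P_i^B=P_i\cap B$, and define
\[
X = P_0^A \cup P_1^B \quad \text{and} \quad Y = P_0^B \cup P_1^A.
\]
Since $P_0^A$ and $P_1^B$ lie in disjoint sides of the bipartition (and likewise for $Y$), a direct count yields $|X|+|Y|=|P_0|+|P_1|=|P|$.

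Next, I would translate the open packing condition in $G\cp K_2$ into two conditions on $G$: (i) if $(x,i),(y,i)\in P$ with $x\ne y$, then $x$ and $y$ have no common neighbor in $G$; and (ii) if $(x,0),(y,1)\in P$ with $x\ne y$, then $xy\notin E(G)$. The key step is to verify that $X$ (and by symmetry $Y$) is a $2$-packing of $G$, i.e.\ any two distinct $u,v\in X$ satisfy $d_G(u,v)\ge 3$. There are three sub-cases. If $u,v\in P_0^A$, bipartiteness forces $d_G(u,v)$ to be even, while (i) rules out $d_G(u,v)=2$, so $d_G(u,v)\ge 4$. The case $u,v\in P_1^B$ is symmetric. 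If $u\in P_0^A$ and $v\in P_1^B$, then $u\in A$ and $v\in B$ force $d_G(u,v)$ to be odd and positive, while (ii) rules out $d_G(u,v)=1$, hence $d_G(u,v)\ge 3$. Consequently $|X|,|Y|\le\rotwo(G)$, and so $|P|=|X|+|Y|\le 2\rotwo(G)$.

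The main role of bipartiteness is to enforce distance parity for the vertices funnelled into $X$ and $Y$: same-side vertices have even distance, so excluding distance $2$ gives distance $\ge 4$; cross-side vertices have odd distance, so excluding distance $1$ gives distance $\ge 3$. The main conceptual obstacle is finding the right bipartition-aware pairing of the two $K_2$-layers: neither $P_0$ nor $P_1$ alone is a $2$-packing in general (they are merely open packings), so ``swapping'' across the bipartition classes between the two layers is precisely what upgrades the open packing of $G\cp K_2$ into two $2$-packings of $G$. The family $G_k$ discussed above confirms that once parity control is lost, this upgrade strategy provably fails.
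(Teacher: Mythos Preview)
Your argument is correct and noticeably cleaner than the paper's. The paper proves the upper bound by \emph{transforming} a maximum open packing $O$ of $G\cp K_2$ into another open packing $O'$ of the special form $P'\times V(K_2)$ with $|O'|\ge|O|$; this requires splitting $O$ into isolated and paired vertices, a five-item transformation rule, and a multi-case verification that $O'$ is still an open packing. You instead \emph{decompose} the projection of $P$ directly into two $2$-packings $X=P_0^A\cup P_1^B$ and $Y=P_0^B\cup P_1^A$ of $G$, with only three short parity subcases. Both arguments exploit exactly the same parity phenomenon, but yours bypasses the reconstruction step entirely. One thing the paper's longer route buys is the structural byproduct that some \emph{maximum} open packing of $G\cp K_2$ has the form $P'\times V(K_2)$; the paper later invokes ``the first part of the proof'' of this theorem to lift a $2$-packing partition of $Q_6$ to an injective coloring of $Q_7$. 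Your decomposition does not immediately yield a single maximum open packing of product form, though it does recover this: if $P$ is maximum then $|X|+|Y|=2\rotwo(G)$ forces both $X$ and $Y$ to be $\rotwo$-sets, and $X\times V(K_2)$ is then a maximum open packing of the desired shape.
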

\begin{proof}
Let $G$ be a bipartite graph with parts $A$ and $B$ forming the natural partition of $V(G)$.
By Lemma~\ref{lem:lower-bound}, we have $\roo(G\cp K_2)\ge 2\rotwo(G)$.
To prove the reversed inequality, consider an open packing $O$ in $G\cp K_2$ such that $|O|=\roo(G\cp K_2)$. We will show that $O$ can be transformed into an open packing $O'$ of the form $P'\times V(K_2)$, where $P'$ is a subset of $V(G)$. (Clearly, the latter also implies that $P'$ is a $2$-packing.)
Note that $O$ can be presented as the disjoint union $I\cup R$, where $I$ is the set of vertices that are isolated in the subgraph of $G\cp K_2$ induced by $O$, while $R$ is the set of vertices that have exactly one neighbor in $O$. Clearly, at least one of the sets $I$ or $R$ is non-empty. Set $V(K_2)=\{1,2\}$, and let $I_i=I\cap V(G^i)$ and $R_i=R\cap V(G^i)$ for all $i\in [2]$. In addition, let $I_i^A=\{(u,i)\in I_i:\, u\in A\}$, $I_i^B=\{(u,i)\in I_i:\, u\in B\}$ for $i\in [2]$, and similarly let $R_i^A=\{(u,i)\in R_i:\, u\in A\}$, $R_i^B=\{(u,i)\in R_i:\, u\in B\}$ for $i\in [2]$.   Next, we compare the two quantities $|I_1^A|+|I_2^B|$ and $|I_2^A|+|I_1^B|$. We may assume with no loss of generality that $|I_1^A|+|I_2^B|\ge|I_2^A|+|I_1^B|$. Now, the announced transformation of $O$ to $O'$ is defined as follows:
\begin{itemize}
\item if $(u,t)\in I_1^A\cup I_2^B$, then let $\{u\}\times V(K_2)\subseteq O'$;
\item if $(u,t)\in I_2^A\cup I_1^B$, then let $(\{u\}\times V(K_2))\cap O'=\emptyset$;
\item if $(u,1)\in R_1$ and  $(u,2)\in R_2$, then let $\{u\}\times V(K_2)\subseteq O'$;
\item if $(u,1)\in R_1^A$ and  $(v,1)\in R_1^B$, where $uv\in E(G)$, then let  $\{u\}\times V(K_2)\subseteq O'$ and  $(\{v\}\times V(K_2))\cap O'=\emptyset$;
\item if $(u,2)\in R_2^A$ and  $(v,2)\in R_2^B$, where $uv\in E(G)$, then let  $\{v\}\times V(K_2)\subseteq O'$ and  $(\{u\}\times V(K_2))\cap O'=\emptyset$.
\end{itemize}
We claim that $|O'|\ge |O|$. Indeed, the first two rows in the above transformation  show that for every vertex $(u,t)\in I_1^A\cup I_2^B$ we get two vertices in $O'$, while for every vertex $(u,t)\in I_2^A\cup I_1^B$ we get no vertices in $O'$, yet $|I_1^A\cup I_2^B|>|I_2^A\cup I_1^B|$ by the earlier assumption. By the last three rows of the above transformation, every pair of vertices in $R$ is replaced by two vertices in $O'$. This altogether implies that $|O'|\ge |O|$, so it remains to prove that $O'$ is an open packing in $G\cp K_2$.

If $(u,1)\in I_1^A$ and $(v,1)\in I_1^A$, then $d_G(u,v)\ge 4$, because the vertices belong to $O$, which is an open packing, and $u$ and $v$ are both in $A$. Thus vertices in $\{u\}\times V(K_2)$ will be at distance at least $4$ from the vertices in $\{v\}\times V(K_2)$. By symmetry, we get the same conclusion for vertices $(u,2)\in I_2^B$ and $(v,2)\in I_2^B$. If $(u,1)\in I_1^A$ and $(v,2)\in I_2^B$, then $d_G(u,v)\ge 3$, because $u$ and $v$ belong to different parts, $A$ and $B$ respectively, of the bipartition of $V(G)$ and they belong to $O$, which is an open packing. Thus, vertices in $\{u\}\times V(K_2)$ will be at distance at least $3$ from the vertices in $\{v\}\times V(K_2)$, as desired. Clearly, if $(u,t)\in I_1^A\cup I_2^B$, then $d_G(u,v)\ge 3$ for any $v\in V(G)$ such that $\{(v,1),(v,2)\}\subset R$. This yields that vertices in $\{u\}\times V(K_2)$ will be at distance at least $3$ from the vertices in $\{v\}\times V(K_2)$. If $(u,1)\in I_1^A$ and $(v,1)\in R_1^A$, we have $d_G(u,v)\ge 4$. On the other hand, if $(u,1)\in I_1^A$ and $(v,2)\in R_2^B$ we have $d_G(u,v)\ge 3$. In either case, the corresponding vertices in $O'$ are at least three apart. By symmetry, we can find that for vertices in $I_2^B$ and vertices in $R_1^A\cup R_2^B$ their distances are sufficiently large so that the corresponding $K_2$-fibers that are in $O'$ will be at distance at least $3$. This completes the proof that the distance between the vertices in $O'$ that appear in the first row of the above transformation to all other vertices in $O'$ will be at least $3$, except of course for two vertices in $O'$ that belong to the same $K_2$-fiber and are adjacent.

Vertices of $O'$ that appear in the third row of the transformation 
remain at distance at least $3$ from all other vertices in $O'$ (with the clear exception of two adjacent such vertices). Therefore, it remains to consider the vertices in $O'$ that appear in the last two rows of the above transformation. Suppose there are two vertices in $R_1^A$ (and a similar argument can be applied if they are in $R_2^B$), say, $(u,1)$ and $(v,1)$, which are not adjacent. Then $d_G(u,v)\ge 4$, and so $\{u\}\times V(K_2)$ will be at distance at least $4$ from the vertices in $\{v\}\times V(K_2)$ (by symmetry, the same conclusion applies if $(u,2)$ and $(v,2)$ are in $R_2^B$). Finally, let $(u,1)\in R_1^A$ and $(v,2)\in R_2^B$. Since $O$ is an open packing, we have $d_G(u,v)>1$, and since they are in different parts of the bipartition, we get $d_G(u,v)\ge 3$. We derive that $\{u\}\times V(K_2)$ will be at distance at least $3$ from the vertices in $\{v\}\times V(K_2)$, which concludes the proof that $O'$ is an open packing. Since $|O|=\roo(G\cp K_2)$ and $|O'|\ge |O|$, we derive $|O'|=|O|=\roo(G\cp K_2)$. In addition, there exists a set $P'\subset V(G)$ such that $O'=P'\times [2]$, where $P'$ is a $2$-packing of $G$. Hence, $|P'|\le \rotwo(G)$, and so $|O'|=2|P'|\le 2\rotwo(G)$, implying $\roo(G\cp K_2)\le 2\rotwo(G)$.
\end{proof}

\section{(Open) packings in hypercubes}
\label{sec:hyp}

The following lemma follows by observing that the restriction of a 2-packing in $G\cp K_2$ to a $G$-layer is a 2-packing of that layer.

\begin{lem}
\label{lem:prism-rotwo}
If $G$ is a graph, then $\rotwo(G\cp K_2) \le 2 \rotwo(G)$.
\end{lem}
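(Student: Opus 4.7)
The plan is to take a maximum $2$-packing $P$ of $G\cp K_2$ and partition it according to the $G$-layers, then verify that each piece projects to a $2$-packing of $G$. Setting $V(K_2)=\{1,2\}$, define $P_i = P\cap V(G^i)$ for $i\in\{1,2\}$, so that $|P| = |P_1|+|P_2|$.

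Next I would argue that if we let $P_i' = \{u\in V(G)\,:\,(u,i)\in P_i\}$, then $P_i'$ is a $2$-packing of $G$. The key observation is the standard distance formula for Cartesian products: for any $u,v\in V(G)$ and any $i\in\{1,2\}$,
\[
d_{G\cp K_2}\bigl((u,i),(v,i)\bigr) \;=\; d_G(u,v) + d_{K_2}(i,i) \;=\; d_G(u,v).
\]
Hence, for distinct $u,v\in P_i'$, the fact that $P$ is a $2$-packing in $G\cp K_2$ gives $d_G(u,v) = d_{G\cp K_2}((u,i),(v,i)) \ge 3$, which is exactly the condition $N_G[u]\cap N_G[v]=\emptyset$. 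Therefore $P_i'$ is a $2$-packing of $G$, and $|P_i| = |P_i'| \le \rotwo(G)$ for each $i$.

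Combining the two inequalities yields
\[
\rotwo(G\cp K_2) \;=\; |P| \;=\; |P_1| + |P_2| \;\le\; 2\rotwo(G),
\]
as required. There is essentially no obstacle here; the only point worth being careful about is to apply the Cartesian distance formula within a single $G$-layer (so that the $K_2$-contribution vanishes) rather than trying to compare vertices sitting in different layers, for which a weaker bound would hold.
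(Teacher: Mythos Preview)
Your proof is correct and follows exactly the approach sketched in the paper: restrict a maximum $2$-packing of $G\cp K_2$ to each $G$-layer and observe that the projection to $G$ is a $2$-packing there. Your write-up simply fills in the details (via the Cartesian distance formula) of what the paper states in a single sentence.
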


We can now bound $\rotwo$ and $\roo$ of hypercubes as follows.

\begin{thm}
\label{thm:lower-bound}
If $n\ge 2$, then
\begin{enumerate}
\item[(i)] $\rotwo(Q_n) \ge  2^{n - \lfloor \log n\rfloor -1}$\quad and
\item[(ii)] $\roo(Q_n) \ge 2^{n - \lfloor \log (n-1)\rfloor -1}$.
\end{enumerate}
\end{thm}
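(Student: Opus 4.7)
The plan is to prove (i) by exhibiting an explicit linear code (a shortened Hamming code) as a $2$-packing, and then to deduce (ii) immediately from (i) by invoking Theorem~\ref{thm:bip-prisms}.

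For (i), I would set $k=\lfloor \log n\rfloor$, so that $2^k \le n \le 2^{k+1}-1$. The idea is to take any $(k+1)\times n$ binary matrix $H$ whose $n$ columns are distinct and non-zero vectors of $\mathbb{F}_2^{k+1}$; this is possible because $\mathbb{F}_2^{k+1}$ contains $2^{k+1}-1 \ge n$ non-zero vectors. Setting $C = \ker H \subseteq \mathbb{F}_2^n$, one gets a linear code of dimension $n - \mathrm{rank}(H) \ge n-k-1$, so $|C| \ge 2^{n-k-1}$. Because any two columns of $H$ are non-zero and distinct, they are $\mathbb{F}_2$-linearly independent, and hence no non-zero codeword of $C$ can have Hamming weight less than $3$; equivalently, any two codewords of $C$ are at Hamming distance at least $3$. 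Identifying $\mathbb{F}_2^n$ with $V(Q_n)$ so that Hamming distance agrees with hypercube distance, this says exactly that $C$ is a $2$-packing of $Q_n$, yielding $\rotwo(Q_n) \ge 2^{n-k-1} = 2^{n-\lfloor\log n\rfloor-1}$.

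For (ii), I would observe that $Q_{n-1}$ is bipartite and $Q_n = Q_{n-1}\cp K_2$, so Theorem~\ref{thm:bip-prisms} applies and gives
\[
\roo(Q_n) \;=\; 2\rotwo(Q_{n-1}).
\]
Applying part (i) to dimension $n-1$ (with the trivial case $n-1=1$, where $\rotwo(K_2)=1 = 2^{1-\lfloor\log 1\rfloor-1}$, handled separately), one gets $\rotwo(Q_{n-1}) \ge 2^{(n-1)-\lfloor\log(n-1)\rfloor-1}$, and multiplying by $2$ yields $\roo(Q_n) \ge 2^{n-\lfloor\log(n-1)\rfloor-1}$, as required.

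The main step is the coding construction in (i); there is no real obstacle, since this is the classical fact that a shortened Hamming code of length $n \le 2^{k+1}-1$ has minimum distance at least $3$, and part (ii) then falls out from the already-established Theorem~\ref{thm:bip-prisms}. The only points to watch are that $\mathrm{rank}(H)$ being strictly less than $k+1$ only makes $|C|$ larger (so the inequality $|C|\ge 2^{n-k-1}$ is always safe), and that the shift from $\lfloor\log n\rfloor$ in (i) to $\lfloor\log(n-1)\rfloor$ in (ii) is precisely the effect produced by the bipartite-prism formula.
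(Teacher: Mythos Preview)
Your proof is correct. For part (ii) your argument is identical to the paper's: apply Theorem~\ref{thm:bip-prisms} to $Q_n = Q_{n-1}\cp K_2$ and feed in the bound from (i).

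For part (i), your route differs from the paper's. You build the $2$-packing directly as a shortened Hamming code: with $k=\lfloor\log n\rfloor$, any $(k+1)\times n$ binary matrix with distinct non-zero columns has kernel of size at least $2^{n-k-1}$ and minimum Hamming distance at least $3$. The paper instead anchors the argument at the dimensions $n=2^k-1$, where $Q_n$ admits a $1$-perfect code (so $\rotwo(Q_{2^k-1})=2^{2^k-k-1}$ exactly), and then descends to the intermediate dimensions $2^{k-1}\le n\le 2^k-2$ via Lemma~\ref{lem:prism-rotwo}, which yields $\rotwo(Q_{m-1})\ge\tfrac12\rotwo(Q_m)$. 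Your approach is more self-contained and explicit, needing neither the perfect-code fact nor the prism lemma; the paper's approach highlights the perfect-code dimensions as the extremal anchors and ties into the prism framework used elsewhere in the paper. Underneath, the two are the same construction viewed differently: the paper's halving step is exactly projection of the Hamming code onto a coordinate hyperplane, i.e.\ shortening.
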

\begin{proof}
(i) Suppose first that $n = 2^k - 1$, where $k\ge 2$. As already mentioned, in these cases $Q_n$ admits a $1$-perfect code, say $S$. Then $|S| = 2^{2^k - 1}/2^k = 2^{2^k - k - 1}$ and consequently
$$\rotwo(Q_n) = |S| = 2^{2^k - k - 1} = 2^{2^k -1 - (k - 1) - 1} = 2^{n - \lfloor \log n\rfloor -1}\,.$$
Consider now the hypercubes $Q_n$, where $k\ge 3$ and $2^{k-1} - 1 < n < 2^k - 1$. In particular, if $n = 2^k - 2$, then since $Q_{2^k - 1} = Q_{2^k - 2} \cp K_2$, Lemma~\ref{lem:prism-rotwo} implies that
$$\rotwo(Q_{n}) = \rotwo(Q_{2^k - 2})
\ge \frac{1}{2} \rotwo(Q_{2^k - 1})
= 2^{2^k-k-2} = 2^{2^k-2 - (k-1)-1}
= 2^{n - \lfloor \log n\rfloor -1}
\,.$$
Inductively applying the lemma, the result holds for all $n$ such that $2^{k-1} - 1 < n < 2^k - 1$. Therefore, (i) holds for all $n\ge 2$.

(ii) Applying Theorem~\ref{thm:bip-prisms} and (i), we have
$$\roo(Q_n) = 2\rotwo(Q_{n-1}) \ge
2\cdot 2^{(n-1) - \lfloor \log (n-1)\rfloor -1} = 2^{n - \lfloor \log (n-1)\rfloor -1}$$
for all $n\ge 2$ and we are done.
\end{proof}

If $n\le 7$, then equality holds in Theorem~\ref{thm:lower-bound}(i). The cases when $n\in \{2,3,4\}$ can be easily argued by case analysis. The equality in  cases when $n\in \{5,6\}$ then follow by combining Lemma~\ref{lem:prism-rotwo} and Theorem~\ref{thm:lower-bound}(i). For $n = 7$, the equality holds because $Q_7$ has a $1$-perfect code. One is thus tempted to conjecture that the lower bound in Theorem~\ref{thm:lower-bound}(i) holds for all $n$. However, with the help of a computer, we found the set
\begin{align*}
T =\ & \{00000000, 00001110, 00110010, 00111100, 01010110, 01011000, \\ & \ 01100100, 01101001, 01111111, 10010100, 10100101, 10101011, \\
& \ 11000111, 11001100, 11011011, 11100010, 11110001\}
\end{align*}
which is a 2-packing in $Q_8$ with $|T| = 17$, hence $\rotwo(Q_8) \ge 17$. By Theorem~\ref{thm:bip-prisms}, this in turn implies that
$\roo(Q_9) \ge 34$. Hence also the lower bound in Theorem~\ref{thm:lower-bound}(ii) is not sharp in general. It is sharp however for all $n\le 8$ because the lower bound in Theorem~\ref{thm:lower-bound}(i) is sharp for $n\le 7$ and because of Theorem~\ref{thm:bip-prisms}.  Furthermore, by using Theorem~\ref{thm:bip-prisms} and the fact
that the lower bound in Theorem~\ref{thm:lower-bound}(i) is sharp when $n=2^k - 1$, it follows that the lower bound in Theorem~\ref{thm:lower-bound}(ii) is sharp for
each value of $n$ that is a power of $2$.

\section{Application to injective colorings}
\label{sec:injective}

An {\em injective coloring} of a graph $G$ is a partition of the vertex set of $G$ into open packings. The {\em injective chromatic number}, $\chi_i(G)$, of $G$ is the minimum cardinality of an injective coloring in $G$. The concept was introduced by Hahn, Kratochv\'{i}l, \v{S}ir\'{a}\v{n} and Sotteau~\cite{hkss} back in 2002, and has been considered by a number of authors, cf.~\cite{brause-2022, chen-2012}. In the recent paper~\cite{bsy}, graphs that admit special types of injective colorings were considered: a graph $G$ is a \emph{perfect injectively colorable graph} if it has an injective coloring in which every color class forms a $\roo$-set of $G$.
The authors of~\cite{bsy} considered hypercubes that are perfect injectively colorable. They noticed that such are the hypercubes $Q_n$, where $n\in [5]$, and proved that for all $k\in \mathbb{N}$, the hypercube $Q_{2^k}$ is a perfect injectively colorable graph.
Apart from the mentioned cases, it was asked in~\cite[Problem 1]{bsy} in which other dimensions the hypercube is perfect injectively colorable.
Since an answer to the question is closely related to computing the value of the open packing number of hypercubes, it was also asked in~\cite[Problem 2]{bsy} what is the value of $\roo(Q_n)$ for $n\ge 6$.

\begin{figure}[ht!]
\begin{center}
\begin{tikzpicture}[scale=1.2,style=thick,x=1cm,y=1cm]
\def\vr{2pt}

\begin{scope} 
\coordinate(1) at (0,0);
\coordinate(2) at (2,0);
\coordinate(3) at (2,2);
\coordinate(4) at (0,2);
\coordinate(5) at (0.6,0.6);
\coordinate(6) at (1.4,0.6);
\coordinate(7) at (1.4,1.4);
\coordinate(8) at (0.6,1.4);
\draw (1) -- (2) -- (3) -- (4) -- (1);
\draw (5) -- (6) -- (7) -- (8) -- (5);
\draw (1) -- (5);
\draw (2) -- (6);
\draw (3) -- (7);
\draw (4) -- (8);
\foreach \i in {1,2,...,8}
{
\draw(\i)[fill=white] circle(\vr);
}
\node at (-0.2,0) {$3$};
\node at (2.2,0) {$4$};
\node at (-0.2,2) {$1$};
\node at (2.2,2) {$2$};
\node at (0.4,0.65) {$2$};
\node at (0.4,1.35) {$4$};
\node at (1.6,0.65) {$1$};
\node at (1.6,1.35) {$3$};
\draw [line width=0.5mm, dashed] (2.2,1) -- (2.8,1);
\draw [line width=0.5mm, dashed] (4,-0.2) -- (4,-0.8);
\draw [line width=0.5mm, dashed] (2.2,-2) -- (2.8,-2);
\draw [line width=0.5mm, dashed] (1,-0.2) -- (1,-0.8);
\draw [line width=0.5mm, dashed] (1,2.2) .. controls (2,3.3) and (6,3.3) .. (7,2.2);
\end{scope}
		
\begin{scope}[xshift=3cm, yshift=0cm] 
\coordinate(1) at (0,0);
\coordinate(2) at (2,0);
\coordinate(3) at (2,2);
\coordinate(4) at (0,2);
\coordinate(5) at (0.6,0.6);
\coordinate(6) at (1.4,0.6);
\coordinate(7) at (1.4,1.4);
\coordinate(8) at (0.6,1.4);
\draw (1) -- (2) -- (3) -- (4) -- (1);
\draw (5) -- (6) -- (7) -- (8) -- (5);
\draw (1) -- (5);
\draw (2) -- (6);
\draw (3) -- (7);
\draw (4) -- (8);
\foreach \i in {1,2,...,8}
{
\draw(\i)[fill=white] circle(\vr);
}
\draw [line width=0.5mm, dashed] (1,2.2) .. controls (2,3.3) and (6,3.3) .. (7,2.2);

\node at (-0.2,0) {$7$};
\node at (2.2,0) {$8$};
\node at (-0.2,2) {$5$};
\node at (2.2,2) {$6$};
\node at (0.4,0.65) {$6$};
\node at (0.4,1.35) {$8$};
\node at (1.6,0.65) {$5$};
\node at (1.6,1.35) {$7$};
\end{scope}

\begin{scope}[xshift=0cm, yshift=-3cm] 
\coordinate(1) at (0,0);
\coordinate(2) at (2,0);
\coordinate(3) at (2,2);
\coordinate(4) at (0,2);
\coordinate(5) at (0.6,0.6);
\coordinate(6) at (1.4,0.6);
\coordinate(7) at (1.4,1.4);
\coordinate(8) at (0.6,1.4);
\draw (1) -- (2) -- (3) -- (4) -- (1);
\draw (5) -- (6) -- (7) -- (8) -- (5);
\draw (1) -- (5);
\draw (2) -- (6);
\draw (3) -- (7);
\draw (4) -- (8);
\foreach \i in {1,2,...,8}
{
\draw(\i)[fill=white] circle(\vr);
}
\draw [line width=0.5mm, dashed] (1,-0.2) .. controls (2,-1.3) and (6,-1.3) .. (7,-0.2);
\node at (-0.2,0) {$8$};
\node at (2.2,0) {$6$};
\node at (-0.2,2) {$7$};
\node at (2.2,2) {$5$};
\node at (0.4,0.65) {$5$};
\node at (0.4,1.35) {$6$};
\node at (1.6,0.65) {$7$};
\node at (1.6,1.35) {$8$};
\end{scope}

\begin{scope}[xshift=3cm, yshift=-3cm] 
\coordinate(1) at (0,0);
\coordinate(2) at (2,0);
\coordinate(3) at (2,2);
\coordinate(4) at (0,2);
\coordinate(5) at (0.6,0.6);
\coordinate(6) at (1.4,0.6);
\coordinate(7) at (1.4,1.4);
\coordinate(8) at (0.6,1.4);
\draw (1) -- (2) -- (3) -- (4) -- (1);
\draw (5) -- (6) -- (7) -- (8) -- (5);
\draw (1) -- (5);
\draw (2) -- (6);
\draw (3) -- (7);
\draw (4) -- (8);
\foreach \i in {1,2,...,8}
{
\draw(\i)[fill=white] circle(\vr);
}
\node at (-0.2,0) {$4$};
\node at (2.2,0) {$2$};
\node at (-0.2,2) {$3$};
\node at (2.2,2) {$1$};
\node at (0.4,0.65) {$1$};
\node at (0.4,1.35) {$2$};
\node at (1.6,0.65) {$3$};
\node at (1.6,1.35) {$4$};
\draw [line width=0.5mm, dashed] (1,-0.2) .. controls (2,-1.3) and (6,-1.3) .. (7,-0.2);
\end{scope}

\begin{scope}[xshift=6cm, yshift=0cm] 
\coordinate(1) at (0,0);
\coordinate(2) at (2,0);
\coordinate(3) at (2,2);
\coordinate(4) at (0,2);
\coordinate(5) at (0.6,0.6);
\coordinate(6) at (1.4,0.6);
\coordinate(7) at (1.4,1.4);
\coordinate(8) at (0.6,1.4);
\draw (1) -- (2) -- (3) -- (4) -- (1);
\draw (5) -- (6) -- (7) -- (8) -- (5);
\draw (1) -- (5);
\draw (2) -- (6);
\draw (3) -- (7);
\draw (4) -- (8);
\foreach \i in {1,2,...,8}
{
\draw(\i)[fill=white] circle(\vr);
}
\draw [line width=0.5mm, dashed] (2.2,1) -- (2.8,1);
\draw [line width=0.5mm, dashed] (4,-0.2) -- (4,-0.8);
\draw [line width=0.5mm, dashed] (2.2,-2) -- (2.8,-2);
\draw [line width=0.5mm, dashed] (1,-0.2) -- (1,-0.8);
\node at (-0.2,0) {$6$};
\node at (2.2,0) {$5$};
\node at (-0.2,2) {$8$};
\node at (2.2,2) {$7$};
\node at (0.4,0.65) {$7$};
\node at (0.4,1.35) {$5$};
\node at (1.6,0.65) {$8$};
\node at (1.6,1.35) {$6$};
\end{scope}

\begin{scope}[xshift=9cm, yshift=0cm] 
\coordinate(1) at (0,0);
\coordinate(2) at (2,0);
\coordinate(3) at (2,2);
\coordinate(4) at (0,2);
\coordinate(5) at (0.6,0.6);
\coordinate(6) at (1.4,0.6);
\coordinate(7) at (1.4,1.4);
\coordinate(8) at (0.6,1.4);
\draw (1) -- (2) -- (3) -- (4) -- (1);
\draw (5) -- (6) -- (7) -- (8) -- (5);
\draw (1) -- (5);
\draw (2) -- (6);
\draw (3) -- (7);
\draw (4) -- (8);
\foreach \i in {1,2,...,8}
{
\draw(\i)[fill=white] circle(\vr);
}
\node at (-0.2,0) {$2$};
\node at (2.2,0) {$1$};
\node at (-0.2,2) {$4$};
\node at (2.2,2) {$3$};
\node at (0.4,0.65) {$3$};
\node at (0.4,1.35) {$1$};
\node at (1.6,0.65) {$4$};
\node at (1.6,1.35) {$2$};
\end{scope}

\begin{scope}[xshift=6cm, yshift=-3cm] 
\coordinate(1) at (0,0);
\coordinate(2) at (2,0);
\coordinate(3) at (2,2);
\coordinate(4) at (0,2);
\coordinate(5) at (0.6,0.6);
\coordinate(6) at (1.4,0.6);
\coordinate(7) at (1.4,1.4);
\coordinate(8) at (0.6,1.4);
\draw (1) -- (2) -- (3) -- (4) -- (1);
\draw (5) -- (6) -- (7) -- (8) -- (5);
\draw (1) -- (5);
\draw (2) -- (6);
\draw (3) -- (7);
\draw (4) -- (8);
\foreach \i in {1,2,...,8}
{
\draw(\i)[fill=white] circle(\vr);
}
\node at (-0.2,0) {$1$};
\node at (2.2,0) {$3$};
\node at (-0.2,2) {$2$};
\node at (2.2,2) {$4$};
\node at (0.4,0.65) {$4$};
\node at (0.4,1.35) {$3$};
\node at (1.6,0.65) {$2$};
\node at (1.6,1.35) {$1$};
\end{scope}

\begin{scope}[xshift=9cm, yshift=-3cm] 
\coordinate(1) at (0,0);
\coordinate(2) at (2,0);
\coordinate(3) at (2,2);
\coordinate(4) at (0,2);
\coordinate(5) at (0.6,0.6);
\coordinate(6) at (1.4,0.6);
\coordinate(7) at (1.4,1.4);
\coordinate(8) at (0.6,1.4);
\draw (1) -- (2) -- (3) -- (4) -- (1);
\draw (5) -- (6) -- (7) -- (8) -- (5);
\draw (1) -- (5);
\draw (2) -- (6);
\draw (3) -- (7);
\draw (4) -- (8);
\foreach \i in {1,2,...,8}
{
\draw(\i)[fill=white] circle(\vr);
}
\node at (-0.2,0) {$5$};
\node at (2.2,0) {$7$};
\node at (-0.2,2) {$6$};
\node at (2.2,2) {$8$};
\node at (0.4,0.65) {$8$};
\node at (0.4,1.35) {$7$};
\node at (1.6,0.65) {$6$};
\node at (1.6,1.35) {$5$};
\end{scope}		
	
\end{tikzpicture}
\caption{Partition of $V(Q_6)$ into (maximum) $2$-packings of $Q_6$.}
	\label{fig:Q6}
\end{center}
\end{figure}
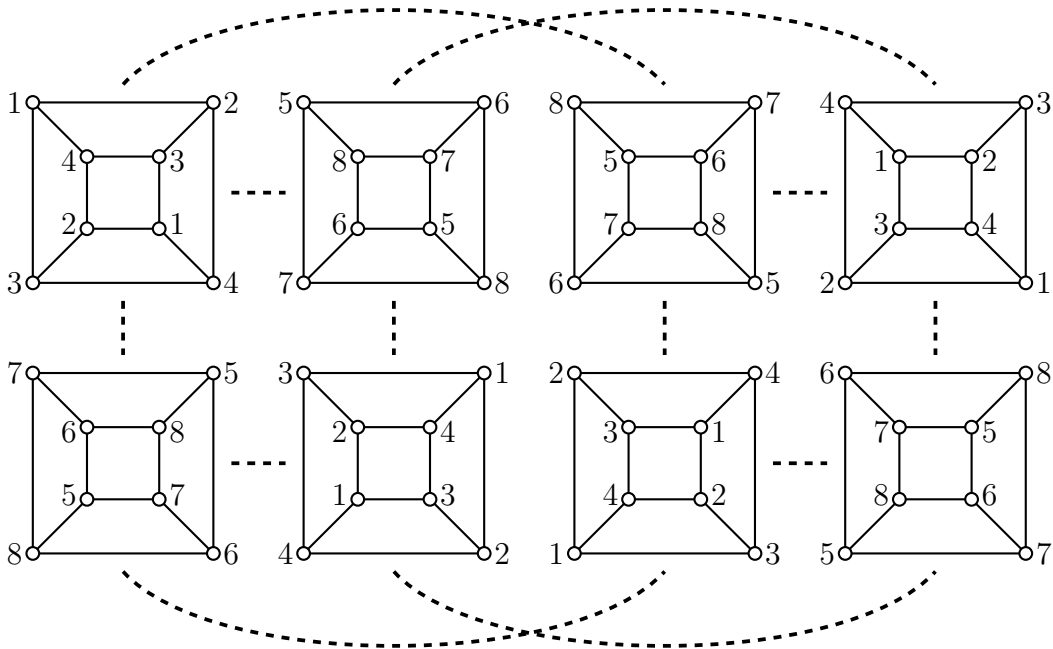

In this note, we give some partial answers to the above two questions. One can easily find that $\rho_2(Q_5)=4$, which by Theorem~\ref{thm:bip-prisms} implies that $\roo(Q_6)=8$. In addition, Fig.~\ref{fig:Q6} shows a maximum $2$-packing of $Q_6$ of cardinality $8$, where vertices of an arbitrary color in $[8]$ form a maximum $2$-packing. This gives, again by Theorem~\ref{thm:bip-prisms}, that $\roo(Q_7)=16$. In addition, recall that $\roo(Q_8)=32$, which follows from the fact that $Q_7$ has a perfect code.
Now, by the observation from Section~\ref{sec:hyp}, we have $\rotwo(Q_8)\ge 17$. On the other hand, we claim that $\rotwo(Q_8)\le 30$. Suppose to the contrary that $\rotwo(Q_8)> 30$, and let $P$ be a $\rotwo$-set of $Q_8$. Then, partitioning $V(Q_8)$ into $Q$ and $Q'$, each of which induces $Q_7$, we infer that either $|Q\cap P|$ or $|Q'\cap P|$ is equal to $16$. We may assume that $|Q\cap P|=16$, and noting that $Q\cap P$ is a $2$-packing of $Q_7$, this implies that $Q\cap P$ corresponds to a perfect code of $Q_7$, thus $Q\cap P$ is a dominating set of $Q$. This in turn implies that every vertex in $Q'$ is at distance at most $2$ from a vertex in $Q\cap P$, which yields that $P=Q\cap P$, and so $|P|=16$, a contradiction proving that $\rotwo(Q_8)\le 30$. Now, using Theorem~\ref{thm:bip-prisms}, we get $34\le \roo(Q_9)\le 60$. In particular, $\roo(Q_9)$ is not a power of $2$, which readily implies that $Q_9$ does not admit a partition into $\roo$-sets, and is consequently not a perfect  injectively colorable graph. On the other hand, refer to Fig.~\ref{fig:Q6} again, which shows a coloring of $Q_6$ in which each color class is a $2$-packing of cardinality $\rotwo(Q_6)$. By applying Theorem~\ref{thm:bip-prisms} and the first part of its proof, one can construct an injective coloring of $Q_7$ in which each color class is a open packing of cardinality $\roo(Q_7)$. Therefore, $Q_7$ is perfect injectively colorable graph.

Summarizing the above, hypercubes $Q_n$, where $n\le 8$, are perfect injectively colorable graphs, and so $Q_9$ is the first instance of a hypercube, which is not in this class of graphs. 

\section{Concluding remarks}

Table~\ref{tab:hypercubes} presents values or bounds on the main domination and packing invariants in hypercubes $Q_n$, for all $n, n\le 9$. The values for $\gamma$ and $\gamma_t$ have been known earlier, while some of the values and bounds for $\rho_2$ and $\roo$ have been obtained in this paper.

\begin{table}[ht!]
\label{tab:hypercubes}
\begin{center}
\begin{tabular}{|l|c|c|c|c|c|c|c|c|c|}
 \hline
  $n$ & 1 & 2 & 3 & 4 & 5& 6& 7& 8& 9 \\
 \hline
  \hline
 $\gamma$ & 1 & 2 & 2 & 4& 7& 12 & 16& 32 & 62  \\
 \hline
 $\gamma_t$ & 2 & 2 & 4 & 4& 8 & 14 & 24 & 32 & 64\\
 \hline
 $\rho_2$ & 1 & 1 & 2 & 2& 4 & 8 & 16 & 17-30 & ?\\
 \hline
 $\roo$ & 2 & 2 & 2 & 4 & 4 & 8 & 16 & 32 & 34-60 \\
 \hline
\end{tabular}
\end{center}
\caption{Packing and domination invariants in hypercubes $Q_n$, where $n<10$.}
\end{table}

In addition, consider the value $\gamma_t(Q_{2^k}) = 2^{2^k-k}$, which follows from Theorem~\ref{thm:infinite-families} combined with the formula $\gamma_t(G\cp K_2)=2\gamma(G)$ from~\cite{azarija-2017}. Now, compare this with the bound $\roo(Q_{2^k}) \ge 2^{2^k-k}$, which follows from Theorem~\ref{thm:lower-bound}(ii) when plugging $n=2^k$. Since $\gamma_t(G)\ge \roo(G)$ for every graph $G$ with no isolated vertices, we infer that
\begin{equation}
\label{eq:cubes}
\gamma_t(Q_{2^k})=2^{2^k-k}=\roo(Q_{2^k}), \textrm{ for all } k\in \mathbb{N}.
\end{equation}

Recall the result from~\cite{Rall} stating that $\gamma_t(G\times H) = \gamma_t(G)\gamma_t(H)$ whenever $G$ is a graph with $\roo(G)=\gamma_t(G)$ and graphs $G$ and $H$ have no isolated vertices. Therefore, from the discussion above we get that 
$$\gamma_t(Q_{2^k}\times H) = 2^{2^k-k}\gamma_t(H)\,,$$ where $k\in \mathbb{N}$ and $H$ is an arbitrary graph with no isolated vertices. An additional family of graphs with this property (that $\gamma_t=\roo$) can be found in~\cite{mohammadi-2019}. It would be interesting to establish if there are any hypercubes $Q_n$ of other dimensions than those in~\eqref{eq:cubes} that satisfy the equality $\gamma_t(Q_n)=\roo(Q_n)$.

\section*{Acknowledgments}

This work was performed within the bilateral grant `Domination in graphs, digraphs and their products" (BI-US/22-24-038). B.B.\ and S.K.\ were supported by the Slovenian Research Agency (ARRS) under the grants P1-0297, J1-2452, N1-0285, and J1-3002.
\medskip

\end{document}